\newtheorem{theorem}{Theorem}[section]
\newtheorem{lemma}[theorem]{Lemma}
\newtheorem{prop}[theorem]{Proposition}
\newtheorem{claim}[theorem]{Claim}
\theoremstyle{definition}
\newtheorem{definition}{Definition}[section]
\theoremstyle{remark}
\newtheorem{remark}{Remark}[section]
\newcommand{\Var}{\operatorname{Var}}
\begin{document}

\title{Littlewood-Offord problems for Ising models}

\author{Yinshan Chang\thanks{Address: College of Mathematics, Sichuan University, Chengdu 610065, China; Email: ychang@scu.edu.cn; Supported by National Natural Science Foundation of China \#11701395.}}

\date{}

\maketitle

\begin{abstract}
We consider the one-dimensional Littlewood-Offord problem for general Ising models. More precisely, we consider the concentration function
\[Q_n(x,v)=P\left(\sum_{i=1}^{n}\varepsilon_iv_i\in(x-1,x+1)\right),\]
where $x\in\mathbb{R}$, $v_1,v_2,\ldots,v_n$ are real numbers such that $|v_1|\geq 1, |v_2|\geq 1,\ldots, |v_n|\geq 1$, and $(\varepsilon_i)_{i=1,2,\ldots,n}\in\{-1,1\}^{n}$ are random spins of some Ising model. Let $Q_n=\sup_{x,v}Q_n(x,v)$. Under natural assumptions, we show that there exists a universal constant $C$ such that for all $n\geq 1$,
\[\binom{n}{[n/2]}2^{-n}\leq Q_n\leq Cn^{-\frac{1}{2}}.\]
As an application of the method, under the same assumption, we give a lower bound on the smallest eigenvalue of the truncated correlation matrix of the Ising model.
\end{abstract}

\section{Introduction}

The classical Littlewood-Offord problem is about a uniform upper bound of the concentration probability
\[P\left(\sum_{i=1}^{n}\varepsilon_iv_i\in(x-1,x+1)\right)\]
where $v_1,v_2,\ldots,v_n$ and $x$ are real numbers such that
\[|v_1|\geq 1, |v_2|\geq 1,\ldots, |v_n|\geq 1,\]
and $(\varepsilon_i)_{i=1,2,\ldots,n}$ are independent Rademacher random variables such that for $i=1,2,\ldots,n$, $P(\varepsilon_i=1)=P(\varepsilon_i=-1)=1/2$. This problem was first considered by Littlewood and Offord \cite{LittlewoodOffordMR0009656} with applications in the study of real roots of random polynomials. Later, Erd\H{o}s \cite{ErdosMR0014608} obtained the following sharp result:
\[\sup_{x\in\mathbb{R}}\sup_{v_1,v_2,\ldots,v_n\in(-\infty,-1]\cup[1,\infty)} P\left(\sum_{i=1}^{n}\varepsilon_iv_i\in(x-1,x+1)\right)=\binom{n}{[n/2]}2^{-n}.\]
The same upper bound was obtained by Kleitman \cite{KleitmanMR0265923} for vectors $x$ and $(v_i)_{i=1,2,\ldots,n}$. We refer to the reference in \cite{TaoVuMR2965282} for the series of work in high dimensions. The inverse problem was first considered by Tao and Vu \cite{TaoVuMR2480613}. It is closely related to random matrices.

Beyond Rademacher series, we notice the results \cite{JKMR4201801} and \cite{SinghalMR4440097} on similar problems for i.i.d. Bernoulli random variables. Beyond independence, we are aware of the result \cite{RaoMR4294326} for symmetric random variables driven by a finite-state reversible stationary Markov chain. Together with Peng, we \cite{LO-CW} considered the Littlewood-Offord problem for spins of Curie-Weiss models and obtained some sharp and asymptotic results. In the present paper, we consider the Littlewood-Offord problems for spins of Ising models.

Ising models are models for magnets in statistical physics. Let us briefly introduce Ising models in the following. Consider a graph $G=(V,\mathcal{E})$ with the vertex set $V$ and the edge set $\mathcal{E}$. Let $V_n\subset V$ be a subset of vertices with $n$ vertices. Let $\mathcal{E}_{n}$ be the set of edges adjacent to some vertex in $V_n$, i.e.
\[\mathcal{E}_n=\{\{i,j\}\in\mathcal{E}:i\in V_n\text{ or }j\in V_n\}.\]
Let $\Sigma_n=\{-1,1\}^{V_n}$ be the configuration space of spins. Let $\kappa\in\{-1,1\}^{V\setminus V_n}$ be a boundary condition. Let $h\in\mathbb{R}^{V_n}$ be the external field. For a configuration $\sigma\in\Sigma_n$, we extend $\sigma$ to a boolean function on $V$ by taking $\sigma_j=\kappa_j$ for $j\notin V_n$. We define the energy of $\sigma$ via a function called Hamiltonian as follows:
\begin{equation}
 H_{n,J,h,\kappa}(\sigma) = -\sum_{\{i,j\}\in\mathcal{E}_n}J_{ij}1_{\sigma_i=\sigma_j}-\sum_{i\in V_n}h_i\sigma_i,
\end{equation}
where $J_{ij}=J_{ji}\in\mathbb{R}$ is the coupling constant. Without loss of generality, we always assume that
\[J_{ij}\neq 0,\forall \{i,j\}\in\mathcal{E}.\]

\begin{definition}
The \emph{Gibbs distribution} $\mu_n$ of the Ising model is a probability measure on $\{-1,1\}^{V_n}$, which is defined by
\begin{equation}
\mu_{n}(\sigma)=e^{-H_{n,J,h,\kappa}(\sigma)}/Z_{n},
\end{equation}
where $Z_{n}$ is the normalizing constant.
\end{definition}
Let $(\varepsilon_i)_{i=1,2,\ldots,n}$ be some random vector sampled according to the Gibbs distribution $\mu_{n}$. We are interested in
\begin{equation}\label{eq: defn Qn}
Q_n=\sup_{x\in\mathbb{R}}\sup_{|v_1|,|v_2|,\ldots,|v_n|\geq 1}P\left(\sum_{i=1}^{n}\varepsilon_iv_i\in(x-1,x+1)\right).
\end{equation}
Our main result is the upper bound and the lower bound of $Q_n$ under the boundedness assumption on the coupling constants $(J_{ij})_{i,j}$ and the external field $(h_i)_i$.
\begin{theorem}\label{thm: main}
Assume that
\begin{equation}\label{eq: boundedness assumption}
K=\sup_{n\geq 1}\max_{i\in V_n}\sum_{j\in V}|J_{ij}|+2\max_{i\in V_n}|h_i|<+\infty.
\end{equation}
Then, there exists a universal constant $C(K)$ such that
\begin{equation}\label{eq: lower and upper bounds for Qn}
\binom{n}{[n/2]}2^{-n}\leq Q_n\leq C(K)n^{-\frac{1}{2}}.
\end{equation}
\end{theorem}
So, under the boundedness assumption \eqref{eq: boundedness assumption}, $Q_n$ is of the order $1/\sqrt{n}$.
\begin{remark}
The lower bound $\binom{n}{[n/2]}2^{-n}\leq Q_n$ in \eqref{eq: lower and upper bounds for Qn} is quite general and holds for all random vectors $(\varepsilon_i)_i\in\{-1,1\}^n$, see Theorem~\ref{thm: general lower bound}.
\end{remark}

\emph{Organization of the paper}: In Section~\ref{sect: preliminary}, we explain Edwards-Sokal coupling for Ising models. The difference from the classical paper \cite{EdwardsSokalMR0965465} is that we allow negative coupling constants $J_{ij}$. In Section~\ref{sect: general lower bounds}, we prove a general lower bound for $Q_n$, which holds for all boolean random vectors. In Section~\ref{sect: Ising}, we prove the upper bound in Theorem~\ref{thm: main}. In Section~\ref{sect: applications}, as an application of our method, we give a lower bound on the smallest eigenvalue of the covariance matrix of the spins in Ising models, see Proposition~\ref{prop: covariance matrix}.

\section{Preliminary: the Edwards-Sokal coupling for Ising models}\label{sect: preliminary}

In this section, we will explain the Edwards-Sokal coupling \cite{EdwardsSokalMR0965465} for Ising models beyond ferromagnetism. For simplicity of notation, we consider the case without external field $h$. The presence of an external field $h$ could be transformed to a certain boundary condition. Indeed, let $\widetilde{V}_n$ be a copy of $V_n$, where the vertex $\widetilde{i}\in\widetilde{V}_n$ is a copy of the vertex $i\in V_n$. Let $\widetilde{V}=V\cup\widetilde{V}_n$ be an enlargement of the original vertex set $V$. We enlarge the edge set $\mathcal{E}$ to $\widetilde{\mathcal{E}}$ by adding the edges $\{i,\widetilde{i}\}$ between $V_n$ and $\widetilde{V}_n$. We put no edge between vertices of $\widetilde{V}_n$. We set the coupling constant
\[J_{i\widetilde{i}}=2h_i\]
for $i\in V_n$ and $\widetilde{i}\in\widetilde{V}_n$. We extend the boundary condition $\kappa$ to $\widetilde{\kappa}$ by setting $\widetilde{\kappa}_{\widetilde{i}}=1$ for all $\widetilde{i}\in\widetilde{V}_n$. Then, the original Gibbs distribution is the same as the Gibbs distribution on the extended graph with boundary condition $\widetilde{\kappa}$ and vanishing external field. Thus, without loss of generality, we assume that there is no external field.

Define
\begin{equation*}
\mathcal{E}_{n}^{+}=\{\{i,j\}\in\mathcal{E}_n:J_{ij}>0\},\quad\mathcal{E}_{n}^{-}=\{\{i,j\}\in\mathcal{E}_n:J_{ij}<0\},
\end{equation*}
i.e., $\mathcal{E}_{n}^{+}$ (resp. $\mathcal{E}_{n}^{-}$) is the collection of edges adjacent to $V_n$ with positive (resp. negative) coupling constants. Recall that we assume that $J_{ij}\neq 0$ if $\{i,j\}$ is an edge in $\mathcal{E}$. Hence, $\mathcal{E}_n$ is the disjoint union of $\mathcal{E}_{n}^{+}$ and $\mathcal{E}_{n}^{-}$. We define a model with coupled random spins and random edges as follows: Let $\sigma=(\sigma_i)_i\in\{-1,1\}^{V_n}$ be the configuration of spins on vertices and let $e=(e_{ij})_{\{i,j\}}\in\{0,1\}^{\mathcal{E}_n}$ be the edge configuration. (Here, the edge $\{i,j\}$ is open if and only if $e_{ij}=1$. The edge $\{i,j\}$ is closed if it is not open.) Define the probability $\mu_{FKSW}(\sigma,e)$ that $\sigma$ and $e$ are realized by
\begin{align}
\mu_{FKSW}(\sigma,e)=&Z_{FKSW}^{-1}\prod_{\{i,j\}\in \mathcal{E}_{n}^{+}}((1-p_{ij})1_{e_{ij}=0}+p_{ij}1_{e_{ij}=1}1_{\sigma_i=\sigma_j})\notag\\ &\times\prod_{\{i,j\}\in\mathcal{E}_{n}^{-}}((1-p_{ij})1_{e_{ij}=0}+p_{ij}1_{e_{ij}=1}1_{\sigma_i\neq\sigma_j}),
\end{align}
where $p_{ij}=1-e^{-|J_{ij}|}$ and $Z_{FKSW}$ is the normalizing constant. Note that $\mu_{FKSW}(\sigma,e)>0$ iff the following constraint is satisfied: For all $\{i,j\}\in\mathcal{E}_{n}^{+}$, $e_{ij}$ must be $0$ as long as $\sigma_i\neq\sigma_j$; and for all $\{i,j\}\in\mathcal{E}_{n}^{-}$, $e_{ij}$ must be $0$ as long as $\sigma_i=\sigma_j$. The marginal distribution of the spin configuration $\sigma$ is given by
\begin{equation}
\mu_{\mathrm{Ising}}(\sigma)=Z^{-1}_{\mathrm{Ising}}e^{\sum_{\{i,j\}\in\mathcal{E}_{n}^{+}}J_{ij}(1_{\sigma_i=\sigma_j}-1)} e^{\sum_{\{i,j\}\in\mathcal{E}_{n}^{-}}J_{ij}1_{\sigma_i=\sigma_j}},
\end{equation}
which is proportional to
\[e^{\sum_{\{i,j\}\in\mathcal{E}_{n}}J_{ij}1_{\sigma_i=\sigma_j}}.\]
Hence, the marginal distribution $\mu_{\mathrm{Ising}}$ is the same as the Gibbs measure of the Ising model (without external field). The marginal distribution of the edge configuration will be called the random cluster model. Due to the possible presence of negative coupling constants $J_{ij}$, the description of the random cluster model is more complicated. As we don't need the precise definition of the random cluster model, we will not discuss the details here.

The two conditional distributions $\mu_{FKSW}(e|\sigma)$ and $\mu_{FKSW}(\sigma|e)$ are important to us. We will discuss them in the following.

Given the spin configuration $\sigma$, the edge random variables $(e_{ij})_{ij}$ are conditionally independent. Moreover, for $\{i,j\}\in\mathcal{E}_{n}^{+}$, when $\sigma_i\neq\sigma_j$, we must have $e_{ij}=0$; when $\sigma_i=\sigma_j$, the conditional probability $\mu_{FKSW}(e_{ij}=1|\sigma)=p_{ij}$. For $\{i,j\}\in\mathcal{E}_{n}^{-}$, when $\sigma_i=\sigma_j$, we must have $e_{ij}=0$; when $\sigma_i\neq\sigma_j$, the conditional probability $\mu_{FKSW}(e_{ij}=1|\sigma)=p_{ij}$. In summary, given the spin configuration $\sigma$, the process $(e_{ij})_{ij}$ is a Bernoulli bond percolation.

Given an edge configuration $e$ with strictly positive possibility, we first define clusters as connected components of $\overline{V_n}$ under certain equivalence relation $\sim$ on $\overline{V_n}$, where
\[\overline{V_n}=\{i\in V: i\in V_n\text{ or }\exists j\in V_n\text{ such that }\{i,j\}\in\mathcal{E}_n\}.\]
Hence, $\overline{V_n}$ is the set $V_n$ plus its neighbor vertices. For two different vertices $i_0$ and $i_{m}$ in $\overline{V_n}$, we define $i_0\sim i_m$ iff there exists $i_1,i_2,\ldots,i_{m-1}\in\overline{V_n}$ such that $e_{i_0i_1}=e_{i_1i_2}=\cdots=e_{i_{m-1}i_m}=1$. Let $C_1,C_2,\ldots,C_{\ell},C_{\ell+1},\ldots,C_{\ell+q}$ be the clusters. Here, without loss of generality, we assume that $C_1,C_2,\ldots,C_{\ell}$ are contained in $V_n$, and $C_{\ell+1},\ldots,C_{\ell+q}$ are not entirely contained in $V_n$. To distinguish these two kinds of clusters, we call the former inner clusters and the later boundary clusters. Fix a total order on $V$. For each $k\geq 1$, choose the smallest vertex $c_k$ in the cluster $C_k$. Define $\eta_k=\sigma_{c_k}$ for $k\geq 1$. For each $k$, once the spin $\eta_k$ is fixed, the spins of the other vertices in $C_k$ are fixed at the same time, according to the following rule: If $\{i,j\}\in\mathcal{E}_{n}^{+}$ and $e_{ij}=1$, then $\sigma_i=\sigma_j$; if $\{i,j\}\in\mathcal{E}_{n}^{-}$ and $e_{ij}=1$, then $\sigma_i=-\sigma_j$. For similar reasons, the spins of vertices in boundary clusters are determined by the boundary condition. Note that there is no conflict of spins since the edge configuration occurs with strictly positive probability under $\mu_{FKSW}$. Next, we call $\eta_k$ the cluster-spin of $C_k$. For a vertex $i$ in a cluster $C_k$, we define
\begin{equation}
 S(i)=\eta_k\sigma_i.
\end{equation}
(Note that we always have $S(c_k)=1$.) In this way, we define a boolean function $S$ on $\overline{V_n}$. Although $S$ depends on $e$ and $\sigma$ by definition, it actually depends only on $e$ as we fix $S(c_k)=1$. Finally, given the edge configuration $e$, the cluster-spins $\eta_{\ell+1},\ldots,\eta_{\ell+q}$ of the boundary clusters are determined by the boundary condition, and the cluster-spins $\eta_1,\eta_2,\ldots,\eta_{\ell}$ of the inner clusters are conditionally independent with the common distribution $\mu_{FKSW}(\eta_j=1|e)=\mu_{FKSW}(\eta_j=0|e)=1/2$. In summary, the spin $\sigma_i$ at the vertex $i$ is fixed according to $S(i)$ and its cluster-spin $\eta_k$. Given the edge configuration $e$, the function $S$ is fixed, the cluster-spins of the inner clusters form Rademacher series, and the cluster-spins of the boundary clusters are determined by boundary conditions.

\section{General lower bounds}\label{sect: general lower bounds}

In this section, we obtain a lower bound for $Q_n$ that holds for any random vector taking values in $\{-1,1\}^n$.
\begin{theorem}\label{thm: general lower bound}
Consider an arbitrary random vector $\varepsilon=(\varepsilon_1,\varepsilon_2,\ldots,\varepsilon_n)$ taking values in $\{-1,1\}^n$. As before, we define
\begin{equation*}
Q_n=\sup_{x\in\mathbb{R}}\sup_{|v_1|,|v_2|,\ldots,|v_n|\geq 1}P(\varepsilon_1v_1+\varepsilon_2v_2+\cdots+\varepsilon_nv_n\in(x-1,x+1)).
\end{equation*}
Then, for all $n\geq 1$, we have that
\begin{equation}
 Q_n\geq \binom{n}{[n/2]}2^{-n}.
\end{equation}
\end{theorem}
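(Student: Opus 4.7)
The plan is to restrict the choice of $v$ to sign vectors $v \in \{-1,1\}^n$ and then use a first-moment argument to pick a good one. For any $\tau \in \{-1,1\}^n$ one has the identity
\[\sum_{i=1}^n \varepsilon_i \tau_i = n - 2\,d_H(\varepsilon,\tau),\]
where $d_H(\varepsilon,\tau) = |\{i : \varepsilon_i \neq \tau_i\}|$ denotes the Hamming distance. Consequently this sum takes values only in $\{-n,-n+2,\ldots,n\}$, so any open interval of length $2$ meets at most one such value. Taking $x_0 := n - 2[n/2] \in \{0,1\}$, the event $\{\sum_i \varepsilon_i \tau_i \in (x_0-1,\,x_0+1)\}$ is precisely $\{d_H(\varepsilon,\tau) = [n/2]\}$. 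Thus it suffices to exhibit some $\tau \in \{-1,1\}^n$ with
\[P\big(d_H(\varepsilon,\tau) = [n/2]\big) \;\geq\; \binom{n}{[n/2]} 2^{-n}.\]

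Such a $\tau$ will be produced by averaging. Sample $\tau$ uniformly from $\{-1,1\}^n$, independent of $\varepsilon$. For each fixed realization of $\varepsilon$, the indicators $1_{\varepsilon_i \neq \tau_i}$ are i.i.d.\ Bernoulli$(1/2)$, so $d_H(\varepsilon,\tau) \sim \Bin(n,1/2)$ and $P_\tau(d_H(\varepsilon,\tau) = [n/2]) = \binom{n}{[n/2]} 2^{-n}$. Interchanging the two expectations yields
\[E_\tau\!\left[P\big(d_H(\varepsilon,\tau) = [n/2]\big)\right] = E_\varepsilon\!\left[P_\tau\big(d_H(\varepsilon,\tau) = [n/2]\big)\right] = \binom{n}{[n/2]} 2^{-n},\]
so some deterministic $\tau^* \in \{-1,1\}^n$ achieves $P(d_H(\varepsilon,\tau^*) = [n/2]) \geq \binom{n}{[n/2]} 2^{-n}$. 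Plugging $v = \tau^*$ and $x = x_0$ into the definition of $Q_n$ finishes the proof.

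I do not foresee any real obstacle — the argument is a one-shot averaging. The only conceptual point is to realize that the full freedom $|v_i| \geq 1$ over real numbers is unnecessary: restricting $v$ to sign vectors already aligns $\sum_i \varepsilon_i v_i$ with Hamming shells of the Boolean cube, and the expected mass of the middle Hamming shell under a uniformly random sign vector is \emph{exactly} the sharp target $\binom{n}{[n/2]} 2^{-n}$, regardless of the law of $\varepsilon$.
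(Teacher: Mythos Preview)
Your proof is correct and follows essentially the same symmetrization idea as the paper: both restrict to sign vectors $v=\tau\in\{-1,1\}^n$, randomize $\tau$ uniformly and independently of $\varepsilon$, and use that $\sum_i\varepsilon_i\tau_i$ then has the law of a simple symmetric walk (equivalently, $d_H(\varepsilon,\tau)\sim\Bin(n,1/2)$). The only cosmetic difference is that the paper passes directly to the averaged bound $Q_n\geq\sup_x P(\sum_i\varepsilon_i\tau_i\in(x-1,x+1))$, whereas you extract a specific $\tau^*$ by pigeonhole before plugging it in.
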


\begin{proof}
Let $I_x=(x-1,x+1)$ and
\[Q_n(x,v)=P\left(\sigma\in \{-1,1\}^n:\, \sum_{i=1}^n v_i\,\sigma_i\in I_x\right).\]
Then
\[Q_n\geq\sup_{x\in\mathbb{R}}\sup_{v\in \{-1,1\}^n} Q_n(x,v).\]
Now, let $B$ be the uniform measure over $\{-1,1\}^n$. For each $\sigma\in \{-1,1\}^n$, we have that
\begin{align}\label{eq: B v 1}
\sum_{v\in\{-1,1\}^n}B(v)1_{I_x}(\sum_{i=1}^n v_i\,\sigma_i)\overset{\delta_i=v_i\sigma_i}{=}&\sum_{\delta\in\{-1,1\}^d}B(\delta)1_{I_x}(\sum_{i=1}^n\delta_i)\notag\\
=&B\left(v\in\{-1,1\}^n:\,\sum_{i=1}^n v_i\in I_x\right),
\end{align}
where $1_{A}(x)$ is the indicator function of the set $A$. Since
\begin{align*}
\sup_{v\in \{-1,1\}^n} Q_n(x,v) \geq &\sum_{v\in\{-1,1\}^n}B(v)Q_n(x,v)\\
=&\sum_{v\in \{-1,1\}^n}\sum_{\sigma\in\{-1,1\}^n}B(v)P(\sigma)1_{I_x}(\sum_{i=1}^nv_i\,\sigma_i)\\
=&\sum_{\sigma\in\{-1,1\}^n}P(\sigma)\sum_{v\in \{-1,1\}^n} B(v)1_{I_x}(\sum_{i=1}^nv_i\,\sigma_i)\\
\overset{\eqref{eq: B v 1}}{=}&\sum_{\sigma\in\{-1,1\}^n}P(\sigma)B\left(v\in\{-1,1\}^n:\,\sum_{i=1}^n v_i\in I_x\right)\\
=&B\left(v\in\{-1,1\}^n:\,\sum_{i=1}^n v_i\in I_x\right),
\end{align*}
we get that
\[Q_n\geq \sup_{x\in\mathbb{R}}B\left(v\in\{-1,1\}^n:\,\sum_{i=1}^n v_i\in I_x\right)=\binom{n}{[n/2]}.\]
\end{proof}

\section{Upper bounds}\label{sect: Ising}

In this section, we prove the upper bound in Theorem~\ref{thm: main}.

As explained at the beginning of Section~\ref{sect: preliminary}, without loss of generality, we assume that the external field $h$ vanishes. Recall the Edwards-Sokal coupling and the notation in Section~\ref{sect: preliminary}.

Firstly, we give a sketch of the proof. By the Edwards-Sokal coupling, given the edge configuration, the random spins $\eta_k$ of the inner clusters form a sequence of i.i.d. Bernoulli random variables with parameter $1/2$. In this way, we turn the problem for Ising models into the classical Littlewood-Offord problem for Rademacher series, see Lemma~\ref{lem: from dependent Ising to Rademacher series} below. Then we wish to use the classical bound for i.i.d. symmetric Bernoulli variables $\eta_k$. However, we encounter a problem there. The coefficient $w_k$ before $\eta_k$ is a signed sum of the coefficients $v_j$. Due to the possible cancellation, it may happen that $|w_k|<1$, which prevents us from using the classical result. Fortunately, for the $k$-th cluster containing only a single vertex $j$, $|w_k|=|v_j|\geq 1$. Let us call these spins isolated spins. By conditioning on the non-isolated spins and using the classical Littlewood-Offord bound for isolated spins, we obtain an upper bound of $Q_n$ in terms of the number $N$ of isolated vertices, see \eqref{eq: upper bound via N}. In the final step, we prove that $N$ is linear in $n$ with high probability, and the upper bound follows, see Lemma~\ref{lem: lower bounds on the number of clusters}.

The whole proof is separated into three steps.

\textbf{Step 1}: from Ising to Rademacher

\begin{lemma}\label{lem: from dependent Ising to Rademacher series}
 The random variable $\sum_{i=1}^{n}\varepsilon_iv_i$ is equal to the random variable $\sum_{k=1}^{\ell}\eta_kw_k+a$,
 where for $k=1,2,\ldots,\ell+q$, $w_{k}:=\sum_{j\in C_k}S(j)v_j$ and $a:=\sum_{k=\ell+1}^{\ell+q}\eta_{k}w_k=\sum_{k=\ell+1}^{\ell+q}\sum_{j\in C_k}\varepsilon_jv_j$. In particular,
 \begin{multline*}
 P\left(\sum_{i=1}^{n}\varepsilon_iv_i\in(x-1,x+1)\right)\\
 =\mu_{FKSW}\left(\sum_{k=1}^{\ell}\eta_kw_k=(x-a-1,x-a+1)\right).
 \end{multline*}
\end{lemma}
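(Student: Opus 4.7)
The plan is to prove this by a direct unpacking of the Edwards--Sokal construction: every vertex in $V_n$ belongs to a unique cluster $C_k$, and once the cluster-spin $\eta_k$ and the skeleton $S$ are fixed, the spin $\sigma_i$ is determined. So I would simply rearrange the sum $\sum_{i\in V_n}\sigma_i v_i$ by grouping vertices according to the cluster they belong to.

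First I would recall from Section~\ref{sect: preliminary} that $S(i)=\eta_k\sigma_i$ for $i\in C_k$, and since $\eta_k\in\{-1,1\}$ this can be inverted to $\sigma_i=\eta_k S(i)$. Adopting the natural convention that $v_j=0$ for $j\notin V_n$ (so that $w_k=\sum_{j\in C_k}S(j)v_j$ is a well defined real number for every cluster, inner or boundary), I would write
\begin{equation*}
 \sum_{i=1}^n\varepsilon_i v_i \;=\; \sum_{i\in V_n}\sigma_i v_i \;=\; \sum_{k=1}^{\ell+q}\sum_{i\in C_k}\sigma_i v_i \;=\; \sum_{k=1}^{\ell+q}\eta_k\sum_{i\in C_k}S(i)v_i \;=\; \sum_{k=1}^{\ell+q}\eta_k w_k.
\end{equation*}
The first equality uses the convention on $v_j$; the second uses that $\{C_k\cap V_n\}_k$ partitions $V_n$; the third uses $\sigma_i=\eta_k S(i)$ and factors the constant $\eta_k$ out of the inner sum; the last is the definition of $w_k$.

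Then I would split the final sum into the $\ell$ inner clusters and the $q$ boundary clusters, giving $\sum_{k=1}^\ell \eta_k w_k + a$ with $a=\sum_{k=\ell+1}^{\ell+q}\eta_k w_k$. The alternative expression $a=\sum_{k=\ell+1}^{\ell+q}\sum_{j\in C_k}\varepsilon_j v_j$ is just the same calculation read backwards on the boundary clusters. Note that for boundary clusters the cluster-spins $\eta_k$ are forced by $\kappa$, so $a$ is a deterministic function of $e$ (once the boundary condition is fixed).

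For the probability statement, the event $\sum_i\varepsilon_i v_i\in(x-1,x+1)$ is, by the identity just established, the same as $\sum_{k=1}^\ell \eta_k w_k\in(x-a-1,x-a+1)$, and since $w_k$ and $a$ are measurable with respect to the edge configuration (they depend only on $e$ through $S$ and the clusters), the equality of probabilities holds under $\mu_{FKSW}$. There is no real obstacle here: the whole content is bookkeeping, and the only thing to be careful about is to make the $v_j=0$ convention explicit so that the sums over boundary clusters make sense as written.
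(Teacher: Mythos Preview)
Your proof is correct and is exactly the natural unpacking of the definitions; the paper in fact offers no explicit proof of this lemma, treating it as an immediate observation from the Edwards--Sokal description in Section~\ref{sect: preliminary}. Your care in making the convention $v_j=0$ for $j\notin V_n$ explicit (so that $w_k$ is well defined for boundary clusters) and in noting that the equality of probabilities uses the fact that the $\sigma$-marginal of $\mu_{FKSW}$ is the Ising Gibbs measure are the only details one might add, and you have both.
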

Fix $(v_j)_{j}$ and the boundary condition of the Ising model. Note that the number $\ell$ of inner clusters and $w_1,w_2,\ldots,w_{\ell}$ and $a$ are determined by the edge configuration $e$. Given the edge configuration $e$, the cluster-spins $\eta_1,\eta_2,\ldots,\eta_{\ell}$ are i.i.d. Bernoulli random variables with parameter $1/2$.

\textbf{Step 2}: upper bound via the number $N$ of isolated vertices

By the Littlewood-Offord bound \cite[Theorem~1]{ErdosMR0014608} for Rademacher series, if $|w_1|,|w_2|,\ldots,|w_{\ell}|\geq 1$, then there exists a finite universal constant $C_1$ such that for all $x\in\mathbb{R}$,
\[\mu_{FKSW}\left(\left.\sum_{k=1}^{\ell}\eta_kw_k\in(x-a-1,x-a+1)\right|e\right)\leq C_1/\sqrt{\ell+1}.\]
However, although $|v_1|,|v_2|,\ldots,|v_n|\geq 1$, we don't necessarily have
\[|w_k|\geq 1,\forall k=1,2,\ldots,\ell\]
in general. Fortunately, if the inner cluster $C_k$ consists of a single vertex, we do have $|w_k|\geq 1$ as $|w_k|$ is equal to some $|v_{j}|$. By reordering the clusters if necessary, without loss of generality, we assume that $C_1,C_2,\ldots,C_{N}$ are all the inner clusters formed by a single vertex, where $N\leq\ell$ is the number of isolated (inner) vertices in the graph with the vertex set $V_n$ and the edge configuration $e$. By the classical Littlewood-Offord theorem for Rademacher series, there exists a finite universal constant $C_1$ such that for all $y\in\mathbb{R}$,
 \[\mu_{FKSW}\left(\left.\sum_{k=1}^{N}\eta_kw_k\in(y-1,y+1)\right|e\right)\leq C_1/\sqrt{N+1}.\]
 Conditionally on the edge configuration $e$, $(\eta_1,\eta_2,\ldots,\eta_{N})$ is independent from $(\eta_{N+1},\eta_{N+2},\ldots,\eta_{\ell})$. Hence, we have that
 \begin{align*}
 &\mu_{FKSW}\left(\left.\sum_{k=1}^{\ell}\eta_kw_k\in(x-a-1,x-a+1)\right|e\right)\\
 =&\mu_{FKSW}\left[\mu_{FKSW}\left(\left.\left.\sum_{k=1}^{N}\eta_{k}w_{k}\in(y-1,y+1)\right|e,\eta_{N+1},\ldots,\eta_{\ell}\right)\right|e\right]\\
 \leq & E[C_1/\sqrt{N+1}|e]\\
 \leq & C_1/\sqrt{N+1},
 \end{align*}
 where $y=x-a-\sum_{k>N}\eta_{k}w_k$. Hence, by taking the expectation and using Lemma~\ref{lem: from dependent Ising to Rademacher series}, we have that
 \begin{equation}\label{eq: upper bound via N}
 Q_n\leq E[C_1/\sqrt{N+1}].
 \end{equation}

 \textbf{Step 3}: the linear growth of $N$ and the conclusion

 With high probability, $N$ is of the same order as $n$. More precisely, we have the following result.
 \begin{lemma}\label{lem: lower bounds on the number of clusters}
 Recall \eqref{eq: boundedness assumption}. Then, there exist universal constants $c_1=c_1(K)$ and $C_2=C_2(K)$ such that for all $n$,
 \[P(N\geq c_1n)\geq 1-C_2n^{-10}.\]
 \end{lemma}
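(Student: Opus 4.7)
Condition on the Ising spin configuration $\sigma$. By the Edwards--Sokal coupling recalled in Section~\ref{sect: preliminary}, the edges $(e_{ij})_{\{i,j\}\in\mathcal{E}_n}$ are then conditionally independent Bernoullis with $P(e_{ij}=0\mid\sigma)\geq e^{-|J_{ij}|}$ (equality when $\sigma$ is compatible with the sign of $J_{ij}$, probability $1$ otherwise). Set $X_i:=\prod_{j}\mathbf{1}_{e_{ij}=0}$, so that the number of isolated inner vertices is $N=\sum_{i\in V_n}X_i$. By independence of edges given $\sigma$, together with the boundedness assumption \eqref{eq: boundedness assumption},
\[E[X_i\mid\sigma]\;\geq\;\prod_{j}e^{-|J_{ij}|}\;\geq\;e^{-K},\qquad E[N\mid\sigma]\;\geq\;ne^{-K}.\]
It thus suffices to prove $P\bigl(N<\tfrac12 ne^{-K}\bigm|\sigma\bigr)\le Cn^{-10}$ uniformly in $\sigma$.

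The $X_i$'s are positively correlated through shared edges in $\mathcal{E}_n$, so Chernoff does not apply directly; however, the dependency is weighted-sparse. A direct computation using $E[X_iX_j\mid\sigma]/\bigl(E[X_i\mid\sigma]E[X_j\mid\sigma]\bigr)=1/P(e_{ij}=0\mid\sigma)$ yields
\[\operatorname{Cov}(X_i,X_j\mid\sigma)\;\leq\;e^K p_{ij}\;\leq\;e^K|J_{ij}|\qquad(\{i,j\}\in\mathcal{E}_n),\]
and zero otherwise, hence, via $\sum_{\{i,j\}\in\mathcal{E}_n}|J_{ij}|\le nK$, one gets $\operatorname{Var}(N\mid\sigma)\le C(K)n$. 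Chebyshev alone then gives only $O(n^{-1})$, which is insufficient. To upgrade the decay to $n^{-10}$ I plan to apply the modified logarithmic Sobolev inequality for the product Bernoulli measure $\mu_{FKSW}(\cdot\mid\sigma)$. Since flipping a single edge $\{i,j\}$ changes at most the two indicators $X_i$ and $X_j$, the relevant variance proxy is
\[\sum_{\{i,j\}\in\mathcal{E}_n}p_{ij}\,(\Delta_{ij}N)^2\;\leq\;4\sum_{\{i,j\}\in\mathcal{E}_n}p_{ij}\;\leq\;4\sum_{\{i,j\}\in\mathcal{E}_n}|J_{ij}|\;\leq\;4nK,\]
and Herbst's argument yields the sub-Gaussian bound
\[P\bigl(N\leq\tfrac12 ne^{-K}\bigm|\sigma\bigr)\;\leq\;\exp(-c(K)n),\]
much stronger than the required polynomial decay. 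Integrating over $\sigma$ and setting $c_1:=e^{-K}/2$, with $C_2(K)$ enlarged to absorb the finitely many small $n$, finishes the proof.

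\textbf{Main obstacle.} The subtlety is extracting the $O(n)$ variance proxy from \eqref{eq: boundedness assumption}: a naive McDiarmid bound gives proxy $4|\mathcal{E}_n|$, which can far exceed $n$ since vertex degrees in $\mathcal{E}$ need not be bounded. Weighting each edge contribution by $p_{ij}\leq|J_{ij}|$ — as provided by modified log-Sobolev, or equivalently a Bernstein-type bounded-differences inequality — is exactly what reduces the proxy to $O(Kn)$ and closes the argument. A self-contained substitute avoiding log-Sobolev is the higher-moment method: bound $E[(N-E[N\mid\sigma])^{2k}\mid\sigma]\le (C(K)n)^{k}k^{k}$ via the weighted dependency structure of $(X_i)$, and apply Markov with $k=10$.
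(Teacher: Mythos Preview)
Your approach is correct and genuinely different from the paper's. You condition on $\sigma$, view $N$ as a bounded-difference function of the independent Bernoulli edge variables, and obtain sub-Gaussian lower-tail concentration with variance proxy $4\sum_{\{i,j\}}p_{ij}\le 4nK$; the cleanest way to make this rigorous is Freedman's (Bernstein-for-martingales) inequality applied to the edge-exposure Doob martingale, exactly the ``Bernstein-type bounded-differences'' route you name. The paper instead runs a sequential exploration: after conditioning on $\sigma$, it picks vertices of $V_n$ one at a time, reveals the remaining incident edges, and observes that the number of newly discovered neighbours at each step is stochastically dominated by an independent $\mathrm{Poisson}(K)$ (since each Bernoulli($p_{ij}$) is dominated by Poisson($|J_{ij}|$) and $\sum_j|J_{ij}|\le K$). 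Comparing to an i.i.d.\ Poisson sequence and applying the exponential Markov inequality then yields $P(N<c(K)n)\le e^{-c'(K)n}$. Both arguments give exponential decay, far stronger than the stated $n^{-10}$. Your route is shorter once one is willing to invoke Freedman, and it avoids the (inessential) reduction to $V_n^{+}$ that the paper makes; the paper's exploration argument is more self-contained, needing only Poisson domination and elementary large-deviation bounds rather than a black-box concentration inequality.
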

 Combining previous results, we have that
 \[Q_n\leq C_1/\sqrt{c_1n+1}+P(N<c_1 n)\leq C_2n^{-10}+C_1/\sqrt{c_1n}\leq C_3/\sqrt{n},\]
 where $C_3=C_3(K)<\infty$ is a universal constant.

 Finally, it remains to prove Lemma~\ref{lem: lower bounds on the number of clusters}.

 \begin{proof}[Proof of Lemma~\ref{lem: lower bounds on the number of clusters}]
 We first sample the spin configuration $\sigma$ according to the Gibbs measure $\mu_{n}$ of the Ising model. Then, we sample the edge configuration $e$ as follows: For an edge $\{i,j\}\in\mathcal{E}_{n}^{+}$, if $\sigma_i\neq\sigma_j$, then set $e_{ij}=0$; if $\sigma_i=\sigma_j$, then set $e_{ij}=0$ with conditional probability $1-p_{ij}$, conditionally independent from the other edges. For an edge $\{i,j\}\in\mathcal{E}_{n}^{-}$, if $\sigma_i=\sigma_j$, then set $e_{ij}=0$; if $\sigma_i\neq\sigma_j$, then set $e_{ij}=0$ with conditional probability $1-p_{ij}$, conditionally independent from the other edges. By Edwards-Sokal coupling explained in Section~\ref{sect: preliminary}, the joint distribution of $(\sigma,e)$ is precisely $\mu_{FKSW}$. Define
 \[V_n^{+}=\{i\in V_n:\sigma_i=1\},\quad V_n^{-}=\{i\in V_n:\sigma_i=-1\}.\]
 Then, $\{V_n^{+},V_n^{-}\}$ is a partition of $V_n$ and $\max(|V_n^{+}|,|V_n^{-}|)\geq n/2$. Without loss of generality, we may assume $|V_n^{+}|\geq n/2$. Consider the conditioned bond percolation process. Let $N^{+}$ be the number of isolated vertices in $V_n^{+}$. It suffices to prove Lemma~\ref{lem: lower bounds on the number of clusters} for $N^{+}$ instead of $N$. For this purpose, we consider the following percolation process on the graph $G=(W,\mathcal{E})$. Here, $W=\overline{V_n}$ and
 \[\mathcal{E}=\{\{i,j\}\in\mathcal{E}_n^{+}:\sigma_i=\sigma_j\}\cup\{\{i,j\}\in\mathcal{E}_n^{-}:\sigma_i\neq\sigma_j\}.\]
 Let $V$ be a subset of vertices of $W$. Assume that $|V|=m\geq n/2$. Here, $V=V_{n}^{+}$. For the probability $p_{ij}$ that $\{i,j\}\in\mathcal{E}$ is open, we assume that there exists a universal constant $K<\infty$ such that for all $i\in V$,
 \begin{equation}\label{eq: upper bounds K sum proba}
 \sum_{j:\{i,j\}\in\mathcal{E}}-\ln(1-p_{ij})\leq K,
 \end{equation}
 which is guaranteed by \eqref{eq: boundedness assumption} and $p_{ij}=1-e^{-|J_{ij}|}$. Let $M$ be the number of isolated vertices in $V$. We have the following claim for general bond percolation under the assumption \eqref{eq: upper bounds K sum proba}:
 \begin{claim}
 There exist universal constants $c=c(K)$ and $C=C(K)$ such that
 \begin{equation}\label{eq: percolation lower bounds number of clusters}
 P(M>c m)\geq 1-Cm^{-10}.
 \end{equation}
 For a more precise statement, see \eqref{eq: P(M>=delta m)} below.
 \end{claim}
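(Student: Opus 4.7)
The plan is to reveal the edges of $\mathcal{E}$ vertex by vertex and combine an Azuma--Hoeffding bound on a Doob martingale with unit increments with a Chernoff bound on an independent auxiliary Bernoulli sequence. I would first fix an arbitrary ordering $V=\{v_1,\dots,v_m\}$ and take $\mathcal F_k$ to be the $\sigma$-algebra generated by the open/closed status of every edge in $\mathcal E$ incident to at least one of $v_1,\dots,v_k$. Writing $X_i\in\{0,1\}$ for the indicator that $i\in V$ is isolated, one has $M=\sum_{k=1}^m X_{v_k}$.

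The key combinatorial observation I would use is the following. For each $k$, let $T_k$ be the indicator of the event that every edge $\{v_k,v_\ell\}\in\mathcal E$ with $\ell<k$ is closed. Then $T_k$ is $\mathcal F_{k-1}$-measurable, and the edge sets on which different $T_k$ depend are pairwise disjoint: for $j<k$, the only edge $T_j$ and $T_k$ could possibly share is $\{v_j,v_k\}$, and this edge belongs to the defining set of $T_k$ but not of $T_j$. Consequently $T_1,\dots,T_m$ are \emph{independent} Bernoulli variables, each with
\[
P(T_k=1)=\prod_{\ell<k,\,\{v_k,v_\ell\}\in\mathcal E}(1-p_{v_k,v_\ell})\geq e^{-K}
\]
by \eqref{eq: upper bounds K sum proba}. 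A Chernoff bound for independent Bernoullis then gives $P\bigl(\sum_k T_k\leq \tfrac12 m e^{-K}\bigr)\leq e^{-me^{-K}/8}$.

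Next, I would set $D_k=X_{v_k}-E[X_{v_k}\mid\mathcal F_{k-1}]$, a martingale difference sequence with $|D_k|\leq 1$. Azuma--Hoeffding yields $P\bigl(\sum_k D_k\leq -\sqrt{22\,m\log m}\bigr)\leq m^{-11}$. On $\{T_k=0\}$ some revealed edge at $v_k$ is already open, forcing $X_{v_k}=0$; on $\{T_k=1\}$ the event $X_{v_k}=1$ reduces to all not-yet-revealed edges at $v_k$ being closed, which has conditional probability at least $e^{-K}$ by \eqref{eq: upper bounds K sum proba}. In either case $E[X_{v_k}\mid\mathcal F_{k-1}]\geq e^{-K}T_k$, and combining the two tail bounds gives, with probability at least $1-m^{-11}-e^{-me^{-K}/8}$,
\[
M\geq e^{-K}\sum_{k=1}^m T_k-\sqrt{22\,m\log m}\geq \tfrac12 e^{-2K} m -\sqrt{22\,m\log m}.
\]
For $m$ larger than some threshold $m_0=m_0(K)$, the right-hand side is at least $\tfrac14 e^{-2K}m$ and the overall failure probability is at most $Cm^{-10}$; for $m\leq m_0$ the desired inequality is rendered vacuous by enlarging $C=C(K)$. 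Thus the claim holds with $c(K)=\tfrac14 e^{-2K}$.

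The only non-routine ingredient is the edge-disjointness observation that makes $(T_k)_k$ independent; once that is recognized, Azuma--Hoeffding on the Doob martingale and Chernoff on the auxiliary sequence combine in a completely standard way, so I do not anticipate any serious technical obstacle.
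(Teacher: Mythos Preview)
Your argument is correct and takes a genuinely different route from the paper's. The paper runs a dynamic exploration: at each step it selects an as-yet-unexplored vertex $v_k$, reveals all remaining edges at $v_k$, and lets $U_k$ be the set of neighbours thus discovered; the vertex $v_k$ is isolated iff $U_k=\emptyset$. Conditionally on the past, $|U_k|$ is stochastically dominated by a Poisson$(K)$ variable, so the whole exploration is dominated by an i.i.d.\ Poisson sequence $(Y_k)$; the number of isolated vertices then stochastically dominates $\sum_{k\le T^Y}1_{Y_k=0}$ for an associated hitting time $T^Y$, and exponential Markov inequalities for Poisson sums finish the job. You instead fix the ordering in advance and exploit the clean combinatorial fact that the ``backward-closed'' indicators $T_k$ depend on pairwise disjoint edge sets and are therefore genuinely independent Bernoulli variables with success probability at least $e^{-K}$; the remaining (forward) randomness is absorbed by Azuma--Hoeffding on the Doob martingale $\sum_k D_k$. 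Your route is shorter, avoids the exploration/coupling machinery and the hitting-time analysis, and produces an explicit constant $c(K)=\tfrac14 e^{-2K}$; the paper's approach is somewhat more robust in that it tracks the full neighbour count $|U_k|$ rather than a $0/1$ indicator, which would adapt more readily if one wanted information about cluster sizes beyond singletons.
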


 To prove \eqref{eq: percolation lower bounds number of clusters}, we gradually discover all the isolated vertices in $V$ by the following algorithm. We list the vertex set $V$ as $i_1,i_2,\ldots,i_m$. And we fix an order on the set of edges. Let $V_1=V$ and $v_1=i_1$. Next, we sample the edges adjacent to $i_1$ in order according to Bernoulli random variables with parameter $p_{i_1j}$, i.e., we keep the edge $\{i_1,j\}$ with probability $p_{i_1j}$ and remove the edge $\{i_1,j\}$ with probability $1-p_{i_1j}$. And there is independence among different edges. Define
 \[U_1=\{j\in W:\{i_1,j\}\text{ is not removed}\}.\]
 Note that $i_1$ is isolated iff $U_1=\emptyset$. If $U_1\neq\emptyset$, then we immediately know that the vertices in $U_1\cup\{i_1\}$ are not isolated. In any case, we don't need to examine the status of the vertices in $U_1\cup\{i_1\}$ anymore in the following steps. We will recursively define $V_{k}$, $v_k\in V_k$ and $U_k$: The set $V_k$ is the unexplored region of $V$ after the first $k-1$ steps. In the $k$-th step, we choose some vertex $v_k\in V_k$ and examine the remaining edges adjacent to $v_k$. Roughly speaking, the vertices in $U_k$ are non-isolated vertices discovered in the $k$-th step. Suppose $V_1,V_2,\ldots,V_k,v_1,\ldots,v_{k},U_1,U_2,\ldots,U_k$ are well defined. We set $V_{k+1}=V_{k}\setminus(U_k\cup\{v_k\})$. If $V_{k+1}=\emptyset$, then the algorithm actually finishes and we set $U_{k+1}=V_{k+2}=U_{k+2}=V_{k+3}=\cdots=\emptyset$. If $V_{k+1}\neq\emptyset$, we choose the vertex $i_{f(k+1)}\in V_{k+1}$ with the smallest index $f(k+1)$ and define $v_{k+1}=i_{f(k+1)}$. Then, we sample the remaining edges adjacent to $v_{k+1}$ in order. We remove an edge $e$ with probability $1-p_{e}$. It is possible to sample an edge with the other endpoint in $\cup_{j\leq k}U_j$. (Note that the edges $\{v_j,v_{k+1}\}$ are already removed for $j=1,2,\ldots,k$. Otherwise, $v_{k+1}$ will belong to some $U_{j}$ and it will not belong to the unexplored region $V_{k+1}$.) Define
 \[U_{k+1}=\{j\in W:\{v_{k+1},j\}\text{ is not removed}\}.\]
 Note that $v_{k+1}$ is isolated iff $U_{k+1}=\emptyset$. Let
 \[T=\max\{k\geq 1:V_{k}\neq\emptyset\}.\]
 We say that the algorithm lasts for $T$ turns. By the construction of the algorithm, we have the following expression for the number $M$ of isolated vertices:
 \[M=\sum_{k=1}^{T}1_{U_k=\emptyset}.\]
 Since we may discover the subset of isolated vertices in $V$ by this algorithm, we will call such an algorithm the algorithm to search (all) the isolated vertices. In this algorithm, we sample the edges in random order. We need to show that the edge configuration in our algorithm has the same distribution as the distribution in the bond percolation. There are two main ingredients: the independence among edges in the bond percolation and the measurability of the exploration process. To be more precise, we list the edges in the order of exploration. Let $\mathcal{S}_n$ be the set of the first $n$ edges. Then the next edge $e_{n+1}$ to explore is a deterministic function $e(\mathcal{S}_n)$ of $\mathcal{S}_n$. Moreover, for a deterministic set $S$ of edges, the event $\{\mathcal{S}_n=S\}$ is measurable with respect to the edge configuration on $S$. (This property is similar to the definition of ``stopping times''.) Hence,
 \[P_{\textrm{percolation}}(e_{n+1}\text{ is open}|\mathcal{S}_n\text{ and the edge configuration on }\mathcal{S}_n)=p_{e(\mathcal{S}_n)},\]
 which agrees with our algorithm. Hence, our algorithm can be used to count the number of isolated vertices in a bond percolation model.

 Define
 \[X_k=|U_k|,\mathcal{F}_k=\sigma(V_j,v_j,U_j,j\leq k)\text{ for }k\geq 1.\]
 Define $\mathcal{F}_0=\{\emptyset,\Omega\}$ to be the trivial minimal $\sigma$-field. Then, we have that
 \[\sum_{k=1}^{T}(X_k+1)\geq m=|V|.\]
 We have an inequality here for two reasons: Firstly, $U_k$ may contain vertices in $W\setminus V$; secondly, there may exist overlaps between different $U_k$. For a Bernoulli random variable $\xi$ such that $P(\xi=1)=1-P(\xi=0)=p$, $\xi$ is stochastically dominated by a Poisson random variable with parameter $\lambda=-\ln(1-p)$. Besides, the sum of independent Poisson random variables is still a Poisson random variable with the new parameter as the sum of parameters. Hence, conditionally on $\mathcal{F}_k$, $X_k$ is stochastically dominated by a Poisson random variable with parameter $K$, where $K$ appears in the condition \eqref{eq: upper bounds K sum proba}. Let $(Y_k)_{k\geq 1}$ be i.i.d. Poisson random variables with the same parameter $K$. Note that $T$ stochastically dominates $T^{Y}$, where
 \[T^{Y}=\inf\{k\geq 1:Y_1+Y_2+\cdots+Y_k\geq m-k\}.\]
 A key observation is that
 \begin{equation}\label{eq: key observation}
 M\text{ stochastically dominates }M^{Y}=\sum_{k=1}^{T^{Y}}1_{Y_k=0}.
 \end{equation}
 By standard arguments with exponential Markov inequality, for a Poisson random variable $Z$ with parameter $\lambda$,
 \begin{equation}\label{eq: Poisson tail}
 P(Z\geq 2\lambda)\leq \exp(-(2\ln 2-1)\lambda).
 \end{equation}
 For $c=c(K)=1/(2K+1)$, we take $k=[cm]$. Then $Y_1+Y_2+\cdots+Y_k$ is a Poisson random variable with parameter $kK$. Moreover, we have that
 \begin{align}\label{eq: lower tail for T^Y}
 P(T^{Y}\leq cm)&\leq P(Y_1+Y_2+\cdots+Y_k\geq m-k)\notag\\
 &\leq P(Z\geq (1-c)m)\notag\\
 &\overset{\lambda=cKm}{=}P(Z\geq 2\lambda)\notag\\
 &\overset{\eqref{eq: Poisson tail}}{\leq}\exp(-(2\ln 2-1)\lambda)\notag\\
 &= \exp(-(2\ln 2-1)Km/(2K+1)),
 \end{align}
 where $Z$ is a Poisson random variable with parameter $\lambda=cKm$. By Hoeffding's inequality, for a binomial random variable $W$ with parameter $(k,q)$, we have that
 \begin{equation}\label{eq: Binomial tail}
 P(W\leq kq/2)\leq e^{-kq^2/2}.
 \end{equation}
 For $k=[cm]$ with $c=1/(2K+1)$, $\sum_{j=1}^{k+1}1_{Y_j=0}$ is a binomial random variable with parameter $(k+1,q)$, where $q=e^{-K}$. Take $\delta=\delta(K)=cq/2=e^{-K}/(4K+2)$. Then $\delta m=cmq/2\leq (k+1)q/2$. Therefore, we have that
 \begin{align*}
 P(M^Y\leq \delta m) &\leq P(T^Y\leq cm)+P\left(T^Y>cm,\sum_{j=1}^{k+1}1_{Y_j=0}\leq\delta m\right)\\
 &\leq P(T^Y\leq cm)+P\left(\sum_{j=1}^{k+1}1_{Y_j=0}\leq (k+1)q/2\right)\\
 &\overset{\eqref{eq: lower tail for T^Y},\eqref{eq: Binomial tail}}{\leq} \exp(-(2\ln 2-1)Km/(2K+1))+\exp(-(k+1)q^2/2)\\
 &\leq \exp(-(2\ln 2-1)Km/(2K+1))\\
 &\quad+\exp(-e^{-2K}m/(4K+2)).
 \end{align*}
 Finally, by the key observation \eqref{eq: key observation}, for $\delta=e^{-K}/(4K+2)$, we have that
 \begin{align}\label{eq: P(M>=delta m)}
 P(M\geq \delta m)&\overset{M^Y\leq_{s.t.} M}{\geq} P(M^Y\geq \delta m)\notag\\
 &\geq 1-\exp(-(2\ln 2-1)Km/(2K+1))\notag\\
 &\quad-\exp(-e^{-2K}m/(4K+2))
 \end{align}
 and \eqref{eq: percolation lower bounds number of clusters} is proved.
 \end{proof}

\section{Applications}\label{sect: applications}

The Littlewood-Offord inequality
\begin{equation}\label{eq: LO ineq}
\sup_{x\in\mathbb{R}}P\left(\sum_{i=1}^{n}\varepsilon_iv_i\in(x-1,x+1)\right)\leq Cn^{-\frac{1}{2}}
\end{equation}
implies the lower bound of the variance of $\sum_{i=1}^{n}\varepsilon_iv_i$. To be more precise, we have the following proposition.
\begin{prop}\label{prop: var}
Let $(\varepsilon_i)_{i=1,2,\ldots,n}$ be the random spins in an Ising model. Under the assumption \eqref{eq: boundedness assumption} on the finiteness of $K$, if $|v_i|\geq 1$ for $i=1,2,\ldots,n$, then there exists $c(K)>0$ such that
\[\Var(\sum_{i=1}^{n}v_i\varepsilon_i)>c(K)n.\]
\end{prop}
\begin{proof}
By Theorem~\ref{thm: main}, \eqref{eq: LO ineq} holds with $C=C(K)$. Let $F(x)$ be the distribution function of $\sum_{i=1}^{n}\varepsilon_iv_i$. Then the jump $F(x)-F(x-0)$ is at most $C(K)n^{-\frac{1}{2}}\leq 1/10$ for sufficiently large $n$. Hence, there exists $-\infty<a<b<\infty$ such that $F(a)\in [1/10,2/10]$ and $1-F(b)\in[1/10,2/10]$. Then $F(b)-F(a)\geq 6/10$. Since the mass $P(\sum_{i=1}^{n}\varepsilon_iv_i\in[x-1/2,x+1/2])$ of each closed interval $[x-1/2,x+1/2]$ is at most $C(K)n^{-\frac{1}{2}}$, we must have that
\[b-a\geq \frac{6\sqrt{n}}{10C(K)}.\]
Let $Z$ and $\tilde{Z}$ be two independent copies of $\sum_{i=1}^{n}\varepsilon_iv_i$. Then there exists $c(K)>0$ such that
\begin{align*}
2\Var(\sum_{i=1}^{n}\varepsilon_iv_i)&=E[(Z-\tilde{Z})^2]\\
&\geq (b-a)^2P(Z\leq a,\tilde{Z}>b)\\
&=(b-a)^2 F(a)(1-F(b))\\
&>c(K)n.
\end{align*}
\end{proof}
Let $\Sigma=(\Sigma_{ij})_{1\leq i,j\leq n}$ be the correlation matrix of the spins $(\varepsilon_i)_i$, i.e.,
\[\Sigma_{ij}=E[\varepsilon_{i}\varepsilon_{j}]-E[\varepsilon_{i}]E[\varepsilon_{j}].\]
Then we have that
\[\Var(\sum_{i=1}^{n}v_i\varepsilon_i)=\sum_{i,j=1}^{n}v_iv_j\Sigma_{ij}.\]
By Proposition~\ref{prop: var}, there exists $c(K)>0$ such that the smallest eigenvalue of $\Sigma$ is larger than $c(K)/\log n$. We will not provide the details. Instead, we prove the following stronger result:
\begin{prop}\label{prop: covariance matrix}
Under the assumption \eqref{eq: boundedness assumption} on the finiteness of $K$, the smallest eigenvalue of $\Sigma$ is not less than $e^{-K}$.
\end{prop}
\begin{proof}
As we explained at the beginning of Section~\ref{sect: preliminary}, without loss of generality, we assume that the external field $h$ vanishes. Recall the Edwards-Sokal coupling and the notation in Section~\ref{sect: preliminary}. By Lemma~\ref{lem: from dependent Ising to Rademacher series}, we have that
\begin{align*}
\Var(\sum_{i=1}^{n}\varepsilon_iv_i)&=\Var(\sum_{k=1}^{\ell}\eta_kw_k+a)\\
&\geq E\left[\Var\left(\left.\sum_{k=1}^{\ell}\eta_kw_k+a\right|\text{edge configuration }e\right)\right]\\
&= E\left[\sum_{k=1}^{\ell}w_k^2\right],
\end{align*}
where
\[w_k=\sum_{j\in C_k}S(j)v_j.\]
If the inner cluster $C_k$ consists of a single vertex $j$, we have $|w_k|=|v_{j}|$. Hence, we have that
\begin{equation}
\Var(\sum_{i=1}^{n}\varepsilon_iv_i)\geq \sum_{j\in V_n}P(j\text{ is isolated})v_j^2.
\end{equation}
Conditionally on the spin configuration $\sigma$, the distribution of the edge configuration $e$ is a Bernoulli bond percolation. Moreover, we have that
\[P(e_{ij}=1|\sigma)=p_{ij}:=(1-e^{-|J_{ij}|})(1_{J_{ij}>0}1_{\sigma_i=\sigma_j}+1_{J_{ij}<0}1_{\sigma_i\neq\sigma_j}).\]
Hence, we have that
\begin{align*}
P(j\text{ is isolated})&=E[P(j\text{ is isolated}|\sigma)]\\
&=E\left[\prod_{i}(1-p_{ij})\right]\\
&\geq e^{-\sum_{i}|J_{ij}|}\\
&\geq e^{-K}.
\end{align*}
Then we have that
\[\sum_{i,j=1}^{n}v_iv_j\Sigma_{ij}=\Var(\sum_{i=1}^{n}\varepsilon_iv_i)\geq \sum_{j=1}^{n}P(j\text{ is isolated})v_j^2\geq e^{-K}\sum_{j=1}^{n}v_j^2.\]
Hence, the smallest eigenvalue of the real symmetric matrix $\Sigma$ is at least $e^{-K}$.
\end{proof}

\bibliographystyle{amsalpha}
\providecommand{\bysame}{\leavevmode\hbox to3em{\hrulefill}\thinspace}
\providecommand{\MR}{\relax\ifhmode\unskip\space\fi MR }
\providecommand{\MRhref}[2]{%
  \href{http://www.ams.org/mathscinet-getitem?mr=#1}{#2}
}
\providecommand{\href}[2]{#2}

\end{document}